\setlist[itemize]{topsep=0ex,itemsep=0ex,parsep=0ex}
\setlist[enumerate]{topsep=0ex,itemsep=0ex,parsep=0ex}
\crefname{lem}{Lemma}{Lemmas}
\crefname{thm}{Theorem}{Theorems}
\crefname{prop}{Proposition}{Propositions}
\crefname{cor}{Corollary}{Corollaries}
\def\NAT@spacechar{~}
\renewcommand{\baselinestretch}{1.15}
\renewcommand{\epsilon}{\varepsilon}
\renewcommand{\geq}{\geqslant}
\renewcommand{\leq}{\leqslant}
\renewcommand{\thefootnote}{\fnsymbol{footnote}}
\theoremstyle{plain}
\newtheorem{thm}{Theorem}
\newtheorem{prop}[thm]{Proposition}
\crefname{obs}{Observation}{Observations}
\newtheorem*{lem*}{Lemma}
\theoremstyle{definition}
\newtheorem*{conj*}{Conjecture}
\newtheorem{remark}{Remark}
\newtheorem{fact}{Fact}
\theoremstyle{problem}
\begin{document}
\title{\bf\boldmath\fontsize{18pt}{18pt}\selectfont A Note on Inequalities for Three Domination Parameters
}

\author{%
Dickson Y. B. Annor\,\footnotemark[5] \qquad\\
}

\date{}

\maketitle

\begin{abstract}
In this short paper, we establish relations between the domination number $\gamma$, the total domination number $\gamma_t$, and the connected domination number $\gamma_c$ of a graph. In particular, we prove upper and lower bounds for $\gamma_t$ in terms of $\gamma$ and $\gamma_c$. 
\end{abstract}

\textbf{Keywords:} domination, total domination, connected domination.

\textbf{2020 Mathematics Subject Classification:} 05C69.

\footnotetext[5]{Department of Mathematical and Physical Sciences, La Trobe University, Bendigo, Australia 
(\texttt{\ d.annor@latrobe.edu.au}). Research of Annor supported by a La Trobe Graduate Research Scholarship. }

\renewcommand{\thefootnote}{\arabic{footnote}}


\section{Introduction}
In this paper, we deal with finite undirected and simple graphs. For a graph $G$, let $V(G)$ and $E(G)$ respectively denote the vertex set and the edge set of $G$.

Let $S$ be a subset of the vertex set of a graph $G$. $S$  is called a \emph{dominating set} of $G$ if
every vertex not in $S$ is adjacent to at least one vertex in $S$. 
The \emph{domination number} of $F$, denoted by $\gamma(G)$, is the minimum cardinality of a dominating set of $F$. A \emph{total dominating set} of $G$ is a set
$S$ such that every vertex is adjacent to some vertex in $S$. Note that by the definition of a total dominating set, it is evident that a graph $G$ admits a total dominating set if and only if $G$ has no isolated vertices.
The \emph{total domination number} of $G$, denoted by $\gamma_t(G)$, is the minimum cardinality of a total dominating set of $G$. A \emph{connected dominating set} $S$ of $G$ is a set such that every vertex not in $S$ is adjacent with at least one vertex in $S$ and the subgraph induced by $S$ is connected. The \emph{connected domination number} $\gamma_c(G)$ is the minimum cardinality of a connected dominating set of $G$. Note that a graph admits a connected dominating set if and only if the graph is connected.

The theory of domination in graphs is one of the main research areas in graph theory. 
In 1979, Bollob\'as and Cockayne \cite{bollobas1979graph} observed that the total domination number of an isolate-free graph is
squeezed between the domination number and twice the domination number.

\begin{thm}[\cite{bollobas1979graph}]\label{them:gt2g}
If $G$ is an isolated-free graph, then $\gamma(G) \leq \gamma_t(G) \leq 2\gamma(G)$.   
\end{thm}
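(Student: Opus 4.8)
The plan is to prove both inequalities separately, starting with the easier lower bound $\gamma(G) \le \gamma_t(G)$. This is immediate from the definitions: any total dominating set $S$ requires every vertex (including those in $S$) to have a neighbour in $S$, which in particular means every vertex not in $S$ is adjacent to a vertex of $S$. Hence a total dominating set is automatically a dominating set, so the minimum cardinality of a total dominating set is at least the minimum cardinality of a dominating set, giving $\gamma(G) \le \gamma_t(G)$.

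For the upper bound $\gamma_t(G) \le 2\gamma(G)$, the natural approach is constructive: start from a minimum dominating set $D$ with $\abs{D} = \gamma(G)$, and enlarge it into a total dominating set by adding at most one extra vertex per element of $D$. The key observation is that $D$ already dominates every vertex outside $D$; the only obstruction to $D$ being a \emph{total} dominating set is that some vertices of $D$ may have no neighbour inside $D$. For each such ``lonely'' vertex $v \in D$, I would use the isolate-free hypothesis: $v$ has at least one neighbour $u$ in $G$, and adding $u$ to the set supplies $v$ with a neighbour in the enlarged set. Let $D'$ be the union of $D$ with one chosen neighbour for each lonely vertex of $D$.

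The main thing to verify is that $D'$ is indeed a total dominating set and that $\abs{D'} \le 2\gamma(G)$. For the total domination property, every vertex of $G$ must have a neighbour in $D'$: vertices outside $D$ are dominated by $D \subseteq D'$; a lonely vertex of $D$ has by construction its added neighbour in $D'$; and a non-lonely vertex of $D$ already has a neighbour in $D \subseteq D'$. The added vertices themselves also have neighbours in $D'$, since each was added precisely as a neighbour of some vertex of $D$. For the cardinality bound, we add at most one vertex for each of the at most $\gamma(G)$ elements of $D$, so $\abs{D'} \le 2\abs{D} = 2\gamma(G)$, and therefore $\gamma_t(G) \le \abs{D'} \le 2\gamma(G)$.

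I anticipate no serious obstacle here, as both bounds follow from elementary set-theoretic and neighbourhood arguments rather than any intricate structural analysis. The only point requiring a little care is the bookkeeping in the upper bound: one must ensure the added neighbours genuinely resolve the total-domination requirement for every vertex simultaneously, and that double-counting (when two lonely vertices happen to share or select the same neighbour) only helps the cardinality bound rather than hurting it. Since choosing at most one neighbour per lonely vertex gives the stated bound even in the worst case where all chosen neighbours are distinct, the inequality $\gamma_t(G) \le 2\gamma(G)$ follows.
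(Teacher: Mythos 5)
Your proof is correct. The paper does not prove this statement itself --- it is quoted from Bollob\'as and Cockayne \cite{bollobas1979graph} --- but your argument is the standard one: a total dominating set is a fortiori a dominating set, giving the lower bound, and augmenting a minimum dominating set $D$ by one neighbour for each vertex of $D$ isolated in $G[D]$ (such a neighbour exists since $G$ is isolate-free) yields a total dominating set of size at most $2\gamma(G)$. Both steps, including the verification that the augmented set totally dominates every vertex and the observation that coincidences among chosen neighbours only help, are handled correctly.
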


It is obvious that a connected dominating set is a dominating set. Sampathkumar and
Walikar \cite{sampathkumar1979connected} proved the following result.

\begin{thm}[\cite{sampathkumar1979connected}]\label{thm:condom}
If $G$ is a connected graph, then
$\gamma(G) \leq \gamma_c(G) \leq 3 \gamma(G) - 2$.   
\end{thm}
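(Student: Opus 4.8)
The plan is to handle the two inequalities separately. The lower bound $\gamma(G) \le \gamma_c(G)$ is immediate: a connected dominating set is in particular a dominating set, so the minimum size of the former cannot fall below the minimum size of the latter. All the work goes into the upper bound $\gamma_c(G) \le 3\gamma(G) - 2$, and the strategy is to start from an arbitrary minimum dominating set $D$ (so $\abs{D} = \gamma(G)$) and connect it up cheaply, paying at most two new vertices for each \emph{link} required.

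First I would introduce an auxiliary graph $H$ on the vertex set $D$, joining $u, v \in D$ by an edge of $H$ whenever $\dist_G(u,v) \le 3$. The central claim is that $H$ is connected. To prove it, suppose instead that $D$ splits into nonempty parts $A$ and $B$ with $\dist_G(a,b) \ge 4$ for every $a \in A$, $b \in B$. Since $D$ dominates $G$, the closed neighbourhoods (each $N[X]$ being the set of vertices in $X$ or adjacent to $X$) satisfy $N[A] \cup N[B] = V(G)$, and because $G$ is connected one finds either a vertex in $N[A] \cap N[B]$ or an edge of $G$ with one endpoint in $N[A]$ and the other in $N[B]$. In the first case the common vertex lies within distance $1$ of some $a \in A$ and of some $b \in B$, forcing $\dist_G(a,b) \le 2$; in the second case the edge yields $\dist_G(a,b) \le 1+1+1 = 3$. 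Either way this contradicts $\dist_G(a,b) \ge 4$, so $H$ is connected.

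Next I would fix a spanning tree $T$ of $H$, which has exactly $\gamma(G) - 1$ edges. By the definition of $H$, for each edge $uv$ of $T$ there is a path in $G$ from $u$ to $v$ of length at most $3$, hence with at most two internal vertices. Let $S$ be the union of $D$ with the internal vertices of one chosen short path for every edge of $T$. Then $S$ is dominating because $D \subseteq S$, and $G[S]$ is connected because the chosen paths (all of whose vertices lie in $S$) realise every edge of the spanning tree $T$ on $D$: each added vertex is tied to $D$ inside $G[S]$, and $D$ itself is held together by $T$. Counting gives $\abs{S} \le \gamma(G) + 2\bigl(\gamma(G)-1\bigr) = 3\gamma(G) - 2$, which is the desired bound.

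The main obstacle is the connectivity claim for $H$; once the distance-$3$ threshold is justified the remainder is bookkeeping. That threshold is exactly the right one, since two dominators of the endpoints of a single edge can sit at distance as much as $3$, so no smaller radius would guarantee that $H$ is connected, and each tree edge then costs at most two interior vertices, producing precisely the factor giving $3\gamma(G)-2$. Finally I would note the degenerate case $\gamma(G)=1$, where $T$ has no edges, $S = D$ is already a connected dominating set, and $\abs{S} = 1 = 3\gamma(G)-2$.
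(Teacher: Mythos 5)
Your argument is correct: the auxiliary graph $H$ on a minimum dominating set $D$ with edges at $G$-distance at most $3$ is indeed connected (the covering/edge-crossing case analysis is exactly the right justification), and realising a spanning tree of $H$ by paths with at most two internal vertices each yields a connected dominating set of size at most $\gamma(G)+2(\gamma(G)-1)=3\gamma(G)-2$. Note, however, that the paper states this theorem as a cited result of Sampathkumar and Walikar and gives no proof of its own, so there is nothing internal to compare against; your proof is the standard one for this bound and fills that gap correctly, including the degenerate case $\gamma(G)=1$.
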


The following easy fact was observed in \cite{henning2022bounds}.

\begin{fact}\label{fact:fun}
If $G$ is a connected isolated-free graph and $\gamma(G)> 1$, then $\gamma(G)\leq \gamma_t(G) \leq \gamma_c(G)$.    
\end{fact}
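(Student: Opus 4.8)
The plan is to treat the two inequalities separately. The lower bound $\gamma(G) \leq \gamma_t(G)$ requires no new work at all, since it is exactly the left-hand inequality of \Cref{them:gt2g}, valid for any isolated-free graph. So I would dispose of it in one line by citing that result, and devote the rest of the argument to the upper bound $\gamma_t(G) \leq \gamma_c(G)$.

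For the upper bound, the natural strategy is to take a minimum connected dominating set $S$, so that $|S| = \gamma_c(G)$, and show that $S$ is in fact already a \emph{total} dominating set; this immediately yields $\gamma_t(G) \leq |S| = \gamma_c(G)$. By definition $S$ dominates every vertex outside $S$, so the only thing separating a connected dominating set from a total dominating set is the requirement that every vertex \emph{inside} $S$ also have a neighbour in $S$. The key observation is that this follows for free from connectedness: since $G[S]$ is connected, as soon as $|S| \geq 2$ the induced subgraph $G[S]$ has no isolated vertex, and hence every vertex of $S$ has a neighbour in $S$.

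It therefore remains only to rule out the degenerate case $|S| = 1$, and this is precisely where the hypothesis $\gamma(G) > 1$ enters. I would invoke the left-hand inequality of \Cref{thm:condom}, namely $\gamma(G) \leq \gamma_c(G)$, to conclude that $\gamma_c(G) \geq \gamma(G) > 1$, so $|S| = \gamma_c(G) \geq 2$. Combining this with the previous paragraph shows that $S$ is a total dominating set, completing the proof.

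The only real obstacle is recognising why the assumption $\gamma(G) > 1$ cannot be dropped: if a single vertex dominated all of $G$, then $\gamma_c(G) = 1$, and that lone vertex would dominate every \emph{other} vertex but would have no neighbour in the singleton set, so it would fail to be a total dominating set. The hypothesis $\gamma(G) > 1$ is exactly the condition that excludes this singleton case, and once it is in place the argument is entirely routine.
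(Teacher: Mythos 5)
Your proof is correct: the lower bound is indeed just the left-hand side of \Cref{them:gt2g}, and your observation that a minimum connected dominating set $S$ with $\abs{S}\geq 2$ is automatically a total dominating set (since a connected induced subgraph on at least two vertices has no isolated vertex) is exactly the standard argument, with the hypothesis $\gamma(G)>1$ used precisely to rule out the singleton case via $\gamma_c(G)\geq\gamma(G)>1$. The paper itself states this as a fact observed in the literature and gives no proof, so there is nothing to compare against, but your argument is complete and is the natural one.
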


A natural question is: what relationships exist between $\gamma, \gamma_t$ and $\gamma_c$?

To our knowledge, little or nothing is known about the relations between $\gamma,\gamma_t$ and $\gamma_c$, (and moreover, any three domination parameters in general). 
This motivates us to study the relationships between $\gamma(G), \gamma_t(G)$ and $\gamma_c(G)$ when $G$ is a connected isolated-free graph.

For an excellent treatment of fundamentals of domination in graphs and recent topics on bounds for these three domination parameters, we refer the reader to the following books \cite{haynes2023domination,haynes2020topics,haynes2013fundamentals}.

\section{Main results} 
In this section, we demonstrate some relationships and show that these results are best possible. 


\begin{thm}\label{prop:con-sum}
 If $G$ is a connected isolated-free graph and $\gamma(G)> 1$, then
 \begin{equation*}
 \gamma_t(G) \leq \gamma(G) + \frac{1}{2}\gamma_c(G).    
 \end{equation*}
 Moreover, this bound is tight.
\end{thm}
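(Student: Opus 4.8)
The plan is to obtain the bound by \emph{adding} the two inequalities already recorded in the excerpt, rather than by constructing a total dominating set from scratch. Note that the target inequality is equivalent, after clearing the denominator, to $2\gamma_t(G) \le 2\gamma(G) + \gamma_c(G)$. So it suffices to produce two upper bounds on $\gamma_t(G)$ whose right-hand sides sum to $2\gamma(G) + \gamma_c(G)$.

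First I would invoke \Cref{them:gt2g}, which gives $\gamma_t(G) \le 2\gamma(G)$ for every isolated-free graph. Next, since the hypothesis guarantees $\gamma(G) > 1$, \Cref{fact:fun} applies and yields $\gamma_t(G) \le \gamma_c(G)$. Adding these two inequalities gives $2\gamma_t(G) \le 2\gamma(G) + \gamma_c(G)$, and dividing by $2$ produces the claimed bound $\gamma_t(G) \le \gamma(G) + \tfrac{1}{2}\gamma_c(G)$. One could instead phrase this as a case split according to whether $\gamma(G) \le \tfrac{1}{2}\gamma_c(G)$, applying \Cref{them:gt2g} in the first case and \Cref{fact:fun} in the second, but the additive version is cleaner. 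Thus there is essentially no obstacle in the inequality itself; the only insight needed is that averaging the two classical bounds is already sharp enough.

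The part that genuinely requires attention is tightness, where equality forces $\gamma_t = 2\gamma$ and $\gamma_t = \gamma_c$ to hold simultaneously. To witness this I would use the path $P_6 = v_1 v_2 \cdots v_6$. Here $\gamma(P_6) = 2$ (for instance $\{v_2, v_5\}$), $\gamma_c(P_6) = 4$ (the four internal vertices), and $\gamma_t(P_6) = 4$, each confirmed by a short direct check showing no smaller set works; in particular $\gamma(P_6) = 2 > 1$, so the hypotheses are met. Then $\gamma(P_6) + \tfrac{1}{2}\gamma_c(P_6) = 2 + 2 = 4 = \gamma_t(P_6)$, so the bound is attained. I expect the main effort to lie in this verification rather than in the inequality, and if larger tight examples are desired one would search among graphs that simultaneously realise $\gamma_t = 2\gamma$ and $\gamma_t = \gamma_c$.
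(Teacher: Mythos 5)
Your proof of the inequality is exactly the paper's argument: add $\gamma_t(G)\le 2\gamma(G)$ from \cref{them:gt2g} to $\gamma_t(G)\le\gamma_c(G)$ from \cref{fact:fun} and divide by two. For tightness you use $P_6$ (with $\gamma=2$, $\gamma_t=\gamma_c=4$) instead of the paper's spider graph $H$, but your values check out, so this is a valid and in fact simpler witness.
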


\begin{proof}
 Let $G$ be a connected isolated-free graph with $\gamma(G)>1$.  From Theorem~\ref{them:gt2g} and Fact~\ref{fact:fun}, we have $\gamma_t(G) \leq 2\gamma(G)$ and $\gamma_t(G) \leq \gamma_c(G)$, respectively. By adding these two inequalities, we get $2\gamma_t(G) \leq 2\gamma(G) + \gamma_c(G)$ and the result follows immediately. In Figure~\ref{fig:1tight}, it can easily be checked that $\gamma(H) = 5$ and $\gamma_t(H) = \gamma_c(H)=10$, which proves that the bound is tight.
\end{proof}

\begin{figure}
    \centering
\begin{tikzpicture}[scale=0.5]
 \draw[ thick] (0,0) -- (2,0); 
\draw[ thick] (2,0) -- (4,0); 
\draw[ thick] (0,0) -- (0,-1.5); 
\draw[ thick] (0,-3) -- (0,-1.5);  
\draw[ thick] (2,0) -- (2,-1.5);  
\draw[ thick] (2,-1.5) -- (2,-3,0); 
\draw[ thick] (4,0) -- (4,-1.5); 
\draw[ thick] (4,-1.5) -- (4,-3);  
\draw[ thick] (2,0) -- (3,1); 
\draw[ thick] (3,1) -- (5,1);
\draw[ thick] (5,1) -- (7,1); 
\draw[ thick] (2,0) -- (1,1); 
\draw[ thick] (1,1) -- (-1,1);  
\draw[ thick] (-1,1) -- (-3,1); 
\filldraw[black] (0,0) circle (3pt);
\filldraw[black] (2,0) circle (3pt);
\filldraw[black] (4,0) circle (3pt);
\filldraw[black] (0,-1.5) circle (3pt);
\filldraw[black] (0,-3) circle (3pt);
\filldraw[black] (2,-1.5) circle (3pt);
\filldraw[black] (2,-3) circle (3pt);
\filldraw[black] (4,-1.5) circle (3pt);
\filldraw[black] (4,-3) circle (3pt);
\filldraw[black] (1,1) circle (3pt);
\filldraw[black] (-1,1) circle (3pt);
\filldraw[black] (-3,1) circle (3pt);
\filldraw[black] (3,1) circle (3pt);
\filldraw[black] (5,1) circle (3pt);
\filldraw[black] (7,1) circle (3pt);
\end{tikzpicture}
    \caption{Graph $H$.}
    \label{fig:1tight}
\end{figure}

Similarly, we can deduce the following result from Theorems~\ref{them:gt2g} and~\ref{thm:condom}.

\begin{thm}\label{Prop:con-dif}
 If $G$ is a connected isolated-free graph, then
 \begin{equation*}
 \gamma_t(G) \leq 5\gamma(G) - \gamma_c(G)-2.    
 \end{equation*}
 Moreover, this bound is tight.
\end{thm}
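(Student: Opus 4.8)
The inequality follows by simply adding the two classical bounds. By \cref{them:gt2g} we have $\gamma_t(G) \leq 2\gamma(G)$, and by \cref{thm:condom} we have $\gamma_c(G) \leq 3\gamma(G) - 2$. Adding these two inequalities yields $\gamma_t(G) + \gamma_c(G) \leq 5\gamma(G) - 2$, and subtracting $\gamma_c(G)$ from both sides gives the claimed bound $\gamma_t(G) \leq 5\gamma(G) - \gamma_c(G) - 2$. So the analytic content of the statement is immediate; the substance lies entirely in the tightness claim.

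For tightness, I would first observe that, since the bound is obtained by summing two inequalities, equality can hold only when both summed inequalities are simultaneously tight: a graph $G$ attains $\gamma_t(G) = 5\gamma(G) - \gamma_c(G) - 2$ if and only if $\gamma_t(G) = 2\gamma(G)$ and $\gamma_c(G) = 3\gamma(G) - 2$. This follows because both $2\gamma(G) - \gamma_t(G) \geq 0$ and $3\gamma(G) - 2 - \gamma_c(G) \geq 0$, and their sum is forced to be zero. The task therefore reduces to exhibiting a graph meeting both extremal conditions at once.

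The plan is to verify that the path $P_6$ does this: a direct check gives $\gamma(P_6) = 2$, $\gamma_t(P_6) = 4$, and $\gamma_c(P_6) = 4$, so that $\gamma_t(P_6) = 2\gamma(P_6)$ and $\gamma_c(P_6) = 3\gamma(P_6) - 2$ both hold, and indeed $5\gamma(P_6) - \gamma_c(P_6) - 2 = 10 - 4 - 2 = 4 = \gamma_t(P_6)$. An even simpler witness is any star $K_{1,n}$, for which $\gamma = 1$, $\gamma_t = 2$, and $\gamma_c = 1$, again giving $5 \cdot 1 - 1 - 2 = 2 = \gamma_t$.

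The only genuine obstacle here is the tightness example, and it is worth emphasising why: the two extremal conditions pull in opposite directions. The condition $\gamma_c = 3\gamma - 2$ is the signature of long, path-like graphs (it is attained by the paths $P_{3k}$), whereas such graphs tend to make $\gamma_t$ much smaller than $2\gamma$; conversely, the condition $\gamma_t = 2\gamma$ favours clustered, star- or corona-like structure, which tends to make $\gamma_c$ small. Balancing both forces one to look at small graphs, and I expect the main care to go into confirming (by short direct arguments, rather than by invoking general path formulas) that $P_6$, or the star family, really does realise both conditions simultaneously.
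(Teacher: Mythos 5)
Your argument is correct and is exactly the one the paper intends: the inequality follows by adding \cref{them:gt2g} and \cref{thm:condom}, and tightness is certified by a graph attaining both bounds simultaneously. The paper uses complete graphs ($\gamma=1$, $\gamma_t=2$, $\gamma_c=1$), which is computationally identical to your star example; your $P_6$ witness (where $\gamma=2$ and both summed inequalities are tight nontrivially) and your observation that equality forces both classical bounds to be tight are correct bonuses beyond what the paper records.
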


Equality in Theorem~\ref{Prop:con-dif} is achieved for complete graphs. In fact, the following generalisation of Theorem~\ref{prop:con-sum} holds for all connected isolated-free graphs.

\begin{thm}\label{thm:allcon}
 For any connected isolated-free graph $G$,
 \begin{equation*}
  \gamma_t(G) \leq  \left \lceil \frac{2(\gamma(G) +\gamma_c(G))}{3}\right\rceil.
\end{equation*}  
Moreover, this bound is tight.
\end{thm}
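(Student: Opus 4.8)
The plan is to reduce the statement to the two linear bounds already in hand, namely $\gamma_t(G) \le 2\gamma(G)$ from Theorem~\ref{them:gt2g} and $\gamma_t(G) \le \gamma_c(G)$ from Fact~\ref{fact:fun}, and to combine them with integer weights $1$ and $2$. Taking the first inequality once and the second twice and adding yields $3\gamma_t(G) \le 2\gamma(G) + 2\gamma_c(G) = 2(\gamma(G)+\gamma_c(G))$, so $\gamma_t(G) \le \tfrac{2(\gamma(G)+\gamma_c(G))}{3}$; since $\gamma_t(G)$ is an integer, the claimed ceiling bound follows immediately. The weights are forced: if one asks a combination $\alpha\,(\gamma_t\le 2\gamma) + \beta\,(\gamma_t\le\gamma_c)$ to produce equal coefficients on $\gamma$ and $\gamma_c$, one gets $\beta = 2\alpha$, which is exactly the pairing above.

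The only thing that stops this from being the whole argument is that $\gamma_t(G) \le \gamma_c(G)$ is guaranteed by Fact~\ref{fact:fun} \emph{only} when $\gamma(G) > 1$. So I would split into two cases. For $\gamma(G) > 1$ the computation above applies verbatim, and in fact delivers the stronger floor bound $\gamma_t(G) \le \lfloor \tfrac{2(\gamma(G)+\gamma_c(G))}{3}\rfloor$. For $\gamma(G) = 1$ one argues directly: a connected isolated-free graph with domination number $1$ has a universal vertex $v$, so $\{v\}$ is already a connected dominating set and $\gamma_c(G) = 1$, while total domination forces $\gamma_t(G) = 2$ (a single vertex cannot dominate itself, and any edge incident to $v$ totally dominates $G$). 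Then $\lceil \tfrac{2(\gamma(G)+\gamma_c(G))}{3}\rceil = \lceil \tfrac{4}{3}\rceil = 2 = \gamma_t(G)$, so the bound holds, with equality.

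The main obstacle is precisely this $\gamma(G)=1$ boundary case: here $\gamma_t(G) = 2 > 1 = \gamma_c(G)$, so the clean inequality $\gamma_t \le \gamma_c$ breaks down and the floor version of the bound is actually false. The ceiling in the statement is exactly what is needed to absorb this behaviour, which explains why the theorem is phrased with a ceiling rather than the floor that the generic case would otherwise supply; it is also the point at which this result genuinely extends Theorem~\ref{prop:con-sum}, whose hypothesis excludes $\gamma(G)=1$.

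For tightness I would exhibit the complete graphs $K_n$ with $n \ge 2$, for which $\gamma(K_n) = \gamma_c(K_n) = 1$ and $\gamma_t(K_n) = 2$, matching $\lceil \tfrac{2(1+1)}{3}\rceil = 2$ exactly. These are the same extremal examples that saturate Theorem~\ref{Prop:con-dif}, which keeps the family of sharp instances consistent across the section.
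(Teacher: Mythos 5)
Your proposal is correct and follows essentially the same route as the paper: combine $\gamma_t(G)\le 2\gamma(G)$ with two copies of $\gamma_t(G)\le\gamma_c(G)$ to get $3\gamma_t(G)\le 2(\gamma(G)+\gamma_c(G))$ when $\gamma(G)>1$, and handle $\gamma(G)=1$ separately via the universal vertex, which also supplies the tight example. Your additional remarks on why the ceiling (rather than the floor) is needed and on the explicit choice of weights are accurate but not part of the paper's argument.
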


\begin{proof}
Let $G$ be a connected isolated-free graph. If $\gamma(G) > 1$, then it follows from Fact~\ref{fact:fun} that $2\gamma_t(G) \leq 2\gamma_c(G)$. So, by Theorem~\ref{them:gt2g}, we have \begin{equation*}
 2\gamma_t(G) + \gamma_t(G) \leq 2\gamma(G) + 2\gamma_c(G) \Rightarrow \gamma_t(G) \leq \frac{2(\gamma(G) + \gamma_c(G))}{3}.  
\end{equation*} 
On the other hand, if $\gamma(G) = 1$, then $G$ has a dominant vertex. That is, $G$ has a vertex that dominates every other vertex of $G$. In that case, $\gamma_t(G) = 2$. So, $\lceil \frac{2 \times 2}{3}\rceil = 2$. Furthermore, this proves that the bound is tight.
\end{proof}


Now we turn our attention to the lower bounds. First, we point out the following lower bound,  which is a consequence of Theorems~\ref{them:gt2g} and~\ref{thm:condom}.

\begin{prop}\label{prop:notstrong}
 If $G$ is a connected isolated-free graph, then
 \begin{equation*}
 \gamma_t(G) \geq 2\gamma(G) - \gamma_c(G).    
 \end{equation*}
 Moreover, this bound is tight.
\end{prop}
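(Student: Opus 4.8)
The plan is to derive the bound $\gamma_t(G) \geq 2\gamma(G) - \gamma_c(G)$ directly from the two previously established theorems, in the same spirit as the earlier propositions of this section. The key observation is that Theorem~\ref{thm:condom} provides an upper bound on $\gamma_c(G)$ in terms of $\gamma(G)$, namely $\gamma_c(G) \leq 3\gamma(G) - 2$, while Theorem~\ref{them:gt2g} provides a lower bound on $\gamma_t(G)$, namely $\gamma_t(G) \geq \gamma(G)$. I would combine these to bound $2\gamma(G) - \gamma_c(G)$ from above by a quantity controlled by $\gamma_t(G)$.

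First I would rewrite the desired inequality as $\gamma_t(G) + \gamma_c(G) \geq 2\gamma(G)$, which isolates the two quantities we have external control over. From Theorem~\ref{them:gt2g} we have $\gamma_t(G) \geq \gamma(G)$, and trivially $\gamma_c(G) \geq \gamma(G)$ (since every connected dominating set is a dominating set, as noted in the paper before Theorem~\ref{thm:condom}). Adding these two inequalities gives $\gamma_t(G) + \gamma_c(G) \geq 2\gamma(G)$ immediately, which rearranges to the claimed bound. This is the cleanest route and avoids any case analysis on whether $\gamma(G) = 1$.

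To confirm tightness, I would exhibit a family of graphs achieving equality $\gamma_t(G) = 2\gamma(G) - \gamma_c(G)$, equivalently $\gamma_t(G) + \gamma_c(G) = 2\gamma(G)$. Since the additive chain collapses exactly when both $\gamma_t(G) = \gamma(G)$ and $\gamma_c(G) = \gamma(G)$ simultaneously, the natural candidates are graphs with a dominant vertex, i.e.\ $\gamma(G) = 1$: there $\gamma_c(G) = 1$ as well, but $\gamma_t(G) = 2 \neq 1$, so this particular family does not give equality and one must look elsewhere.

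The main obstacle I anticipate is precisely the tightness claim rather than the inequality itself. The equality requires $\gamma_t(G) = \gamma_c(G) = \gamma(G)$, and by Fact~\ref{fact:fun} when $\gamma(G) > 1$ we have $\gamma(G) \leq \gamma_t(G) \leq \gamma_c(G)$, so equality throughout forces the rare regime where all three parameters coincide; the complete graph $K_n$ with $n \geq 2$ satisfies $\gamma = \gamma_t = \gamma_c$ only for $n=2$ (where $\gamma = 1$, $\gamma_t = 2$), so one should rather search among graphs like complete bipartite or specifically constructed examples where a single dominating set is simultaneously minimum as a dominating, total dominating, and connected dominating set. I would verify such an example explicitly and record its parameters to certify that the bound cannot be improved.
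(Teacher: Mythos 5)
Your derivation of the inequality is correct and is exactly the argument the paper intends: Proposition~\ref{prop:notstrong} is presented as a consequence of Theorems~\ref{them:gt2g} and~\ref{thm:condom}, and adding $\gamma_t(G)\ge\gamma(G)$ to $\gamma_c(G)\ge\gamma(G)$ yields $\gamma_t(G)+\gamma_c(G)\ge 2\gamma(G)$ with no case analysis. Your further observation that equality forces $\gamma(G)=\gamma_t(G)=\gamma_c(G)$ is also correct, and is sharper than anything the paper states explicitly.

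The gap is in the tightness claim, which is part of the statement: you correctly rule out the $\gamma(G)=1$ case and complete graphs (although your sentence asserting that $K_n$ satisfies $\gamma=\gamma_t=\gamma_c$ ``only for $n=2$'' is garbled, since $K_2$ has $\gamma=1$ and $\gamma_t=2$, so no complete graph works), and you point in the right direction with complete bipartite graphs, but you stop at ``I would verify such an example'' without producing one. To close this, take $K_{2,2}=C_4$, or more generally $K_{m,n}$ with $m,n\ge 2$: two adjacent vertices, one from each side, form a set that is simultaneously a minimum dominating, total dominating, and connected dominating set, and no single vertex dominates, so $\gamma=\gamma_t=\gamma_c=2$ and $\gamma_t=2=2\cdot 2-2=2\gamma-\gamma_c$. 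The paper's own witness, the graph $G'$ of Figure~\ref{fig:K2221}, is of the same kind, with all three parameters equal to $2$.
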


\begin{figure}[ht]
    \centering
    \begin{tikzpicture}[scale=0.3]
 \draw[ thick ] (0,0)--(8,0);
 \draw[ thick ] (0,0)--(4,8);
 \draw[ thick ] (8,0)--(4,8);
 \draw[ thick ] (2,4)--(6,4);
\draw[ thick ] (4,0)--(6,4);
 \draw[ thick ] (4,0)--(2,4);
 \filldraw[black] (0,0) circle (6pt);
 \filldraw[black] (4,0) circle (6pt);
 \filldraw[black] (8,0) circle (6pt);
 \filldraw[black] (2,4) circle (6pt);
 \filldraw[black] (4,8) circle (6pt);
 \filldraw[black] (6,4) circle (6pt);
    \end{tikzpicture}
    \caption{Graph $G'$.}
    \label{fig:K2221}
\end{figure}

\begin{remark}
Equality is achieved for the graph $G'$ in Figure~\ref{fig:K2221}. 
\end{remark}

Finally, we end this section by proving the following bound.

\begin{thm}\label{thm:lower}
For every connected isolated-free graph $G$,
\begin{equation*}\label{eq:wlow}
  \gamma_t(G) \geq  \left \lfloor \frac{3\gamma(G) +\gamma_c(G)}{6}\right\rfloor.
\end{equation*}
\end{thm}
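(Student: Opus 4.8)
The plan is to show directly that $6\gamma_t(G) \geq 3\gamma(G) + \gamma_c(G)$; since $\gamma_t(G)$ is an integer, dividing by $6$ and taking the ceiling then yields the claimed inequality. The crucial observation is that in this weakened statement the coefficient of $\gamma_c$ is only $1$ (rather than $2$ as in \Cref{conj:lower}), and this is exactly small enough to be absorbed by the two classical bounds recalled in the introduction, with no additional structural input needed.

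First I would record the two ingredients. From \Cref{them:gt2g} we have $\gamma(G) \leq \gamma_t(G)$, which after multiplying by $3$ gives $3\gamma(G) \leq 3\gamma_t(G)$. From \Cref{thm:condom} we have $\gamma_c(G) \leq 3\gamma(G) - 2 \leq 3\gamma(G)$, and combining this with $\gamma(G) \leq \gamma_t(G)$ yields $\gamma_c(G) \leq 3\gamma(G) \leq 3\gamma_t(G)$. Both estimates are legitimate because $G$ is assumed connected and isolated-free, so $\gamma_c(G)$ is defined and both \Cref{them:gt2g} and \Cref{thm:condom} are available; note in particular that no hypothesis $\gamma(G) > 1$ is required, so the argument covers the degenerate case as well.

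Adding the two resulting inequalities $3\gamma(G) \leq 3\gamma_t(G)$ and $\gamma_c(G) \leq 3\gamma_t(G)$ gives $3\gamma(G) + \gamma_c(G) \leq 6\gamma_t(G)$, hence $\gamma_t(G) \geq \frac{3\gamma(G) + \gamma_c(G)}{6}$. Since $\gamma_t(G) \in \mathbb{N}$, the right-hand side may be replaced by its ceiling, which completes the argument.

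There is essentially no genuine obstacle here: the whole point is that the coefficient $1$ on $\gamma_c$ matches precisely what the crude estimate $\gamma_c \leq 3\gamma \leq 3\gamma_t$ can deliver. It is worth flagging why the same approach cannot reach the full \Cref{conj:lower}: bounding the term $2\gamma_c$ would require $2\gamma_c \leq 3\gamma_t$, i.e.\ $\gamma_c \leq \tfrac{3}{2}\gamma_t$, which does not follow from the classical chain $\gamma \leq \gamma_t$ together with $\gamma_c \leq 3\gamma - 2$, and would instead demand a genuinely finer analysis of how a minimum connected dominating set relates to a minimum total dominating set.
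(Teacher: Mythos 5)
Your proposal is correct and uses exactly the same two ingredients as the paper's proof, namely $\gamma(G) \leq \gamma_t(G)$ from \Cref{them:gt2g} and $\gamma_c(G) \leq 3\gamma(G) - 2$ from \Cref{thm:condom}; the only difference is that you argue directly while the paper phrases the same computation as a proof by contradiction. This is not a genuinely different route, so no further comparison is needed.
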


\begin{proof}
Suppose not. Then $\gamma_t(G) <  \left \lfloor \frac{3\gamma(G) +\gamma_c(G)}{6}\right\rfloor$, which implies that $\gamma_t(G) <   \frac{3\gamma(G) +\gamma_c(G)}{6}$  because $\gamma_t(G) \in \mathbb{N}$. Applying Theorem~\ref{thm:condom}, we get
\begin{equation*}
  6\gamma_t(G) < 3\gamma(G) + \gamma_c(G) \leq 3\gamma(G) + 3\gamma(G) -2 = 6\gamma(G)-2.   
\end{equation*}
Once again, Theorem~\ref{them:gt2g} implies that $6\gamma(G) \leq 6\gamma_t(G) < 6\gamma(G)-2$; a contradiction.    
\end{proof}

\section{Conclusion}
In this paper, we established relationships between $\gamma(G), \gamma_t(G)$ and $\gamma_c(G)$ for a connected isolated-free graph $G$. 


\bibliographystyle{plain}
\bibliography{references}

\end{document}